\begin{document}
~~~~~~~~~~~~~~~\textbf{A multivariate version of Polya-Carlson theorem}

~~~~~~~~~~~~~~~~~~~~~~~~~~~~~~~~~~~~~~~~~~~~~Tianlong Yu
\\
\\
\textbf{Abstract}:
Polya-Carlson theorem asserts that if a power series with integer coefficients and convergence radius 1 can be extended holomorphically out of the unit disc, it must represent a rational function. In this note, we give a generalization of this result to multivariate case and give an application to rationality theorem about D-finite power series.
\\
\\
\\
\textbf{Introduction}:
\begin{theorem}[Polya-Carlson]([2][8])
Let $f(z)=\sum_{n=0}^{\infty}a_{n}z^n$ be a power series converging on the unit disc. Assume $a_n\in \mathbb{Z}$ for any n and f can be extended holomorphically out of the unit disc, then f represents a rational function. i.e. there exists polynomials P and Q with integer coefficients such that $f(z)=\frac{P(z)}{Q(z)}$.
\end{theorem}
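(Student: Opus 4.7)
The plan is to reduce rationality to a statement about Hankel determinants, then exploit that the coefficients are integers. The first ingredient is Kronecker's classical criterion: $f$ represents a rational function if and only if the Hankel determinants
\[
H_n := \det\bigl(a_{i+j}\bigr)_{0 \le i,\,j \le n}
\]
vanish for all sufficiently large $n$. Since each $a_k \in \ZZ$, each $H_n$ is an integer, so it suffices to prove $|H_n| < 1$ eventually; in fact I would aim at the stronger conclusion $\limsup_{n\to\infty} |H_n|^{1/n^2} < 1$.

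The second ingredient is an integral representation of $H_n$. Starting from $a_m = \frac{1}{2\pi i}\int_{\gamma} f(z) z^{-m-1}\,dz$ on any closed contour $\gamma$ encircling $0$ inside the domain of holomorphy of $f$, I would use multilinearity of the determinant column by column and evaluate the resulting Vandermonde to arrive at
\[
H_n \;=\; \frac{(-1)^{n(n+1)/2}}{(2\pi i)^{n+1}} \int_{\gamma}\!\cdots\!\int_{\gamma} \prod_{j=0}^{n}\frac{f(z_j)}{z_j^{\,j+n+1}}\prod_{0 \le i<j \le n}(z_j - z_i)\,dz_0 \cdots dz_n.
\]

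To exploit the hypothesis, let $E \subsetneq \partial\DD$ denote the closed set of boundary points across which $f$ does not continue holomorphically; by assumption its complement in $\partial\DD$ is a nonempty open arc. I would construct a Jordan curve $\Gamma$ lying in the domain of holomorphy of $f$, encircling $0$ once, situated strictly outside $\overline{\DD}$ along the arcs of $\partial\DD \setminus E$ and only slightly inside $\overline{\DD}$ in a small neighborhood of $E$. Plugging $\Gamma$ into the formula above and applying Hadamard's inequality (together with the classical estimate of the Vandermonde in terms of the transfinite diameter of $\Gamma$), the weight $\prod_j|z_j|^{-(j+n+1)}$ provides super-polynomial gain on most of $\Gamma^{n+1}$, while the Vandermonde costs only the transfinite diameter raised to $n(n-1)/2$. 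A direct computation shows the balance is favorable precisely when the conformal radius (equivalently, logarithmic capacity) of the region enclosed by $\Gamma$ strictly exceeds $1$, which happens whenever $\Gamma$ genuinely escapes $\overline{\DD}$ on a boundary arc of positive capacity.

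The main obstacle will be the construction of $\Gamma$ and the accompanying capacity estimate: the naive choice $\Gamma = \{|z| = 1 + \delta\}$ is illegal unless $f$ extends across all of $\partial\DD$, which is strictly stronger than the hypothesis at hand. Deforming $\Gamma$ back inside $\overline{\DD}$ near $E$ without sacrificing the capacity gain from the arc $\partial\DD \setminus E$ is exactly the point where Carlson refined Polya's original treatment of the case $E = \varnothing$. This step requires some potential theory, or alternatively a Riemann map $\phi:\DD\to\Omega$ onto a region $\Omega$ containing $\overline{\DD}$ (minus a neighborhood of $E$) together with Schwarz's lemma applied to $\phi^{-1}$ to secure $\phi'(0) > 1$ strictly; once this is in place, the rest of the argument is a routine assembly of Kronecker's criterion with the integer-coefficient constraint.
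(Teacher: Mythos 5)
Your outline follows the same route the paper itself uses (in multivariate form) to prove its main theorem: the paper only cites Theorem 1 to Polya and Carlson, but its proof of Theorem 2 is precisely Kronecker's criterion, a Cauchy-integral representation of the Hankel determinant, and a transfinite-diameter estimate for an inverted contour, and your sketch is the one-variable specialization of that argument. However, the quantitative balance as you state it does not close. The weight $\prod_j|z_j|^{-(j+n+1)}$ is not a gain: on the portion of $\Gamma$ that dips inside $\overline{\DD}$ near $E$ it costs a factor of order $\eta^{-cn^2}$ with $\eta=\min_{z\in\Gamma}|z|<1$, i.e.\ the same order $e^{cn^2}$ as the decay you hope to extract. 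The decay actually comes from the identity $\prod_{i<j}(z_j-z_i)\prod_j z_j^{-n}=\prod_{i<j}(z_i^{-1}-z_j^{-1})$, a Vandermonde in the inverted variables bounded by $V_{n+1}(\iota(\Gamma))\approx d(\iota(\Gamma))^{n(n+1)/2}$ with $\iota(z)=1/z$; Hadamard's inequality contributes nothing of the required strength. After this regrouping a residual weight $\prod_j|z_j|^{-(j+1)}\le\eta^{-(n+1)(n+2)/2}$ survives, so your estimate needs $d(\iota(\Gamma))<\eta$, not merely $<1$. The paper sidesteps this by symmetrizing the iterated integral over permutations of the variables, which produces the \emph{square} of the inverted Vandermonde and collapses the weight to $\prod_j|z_j|^{-1}$ (exponential in $n$ only); alternatively one can let the inner radius $1-\delta$ tend to $1$ and note that $d(\iota(\Gamma(\delta)))$ stays bounded away from $1$ uniformly in small $\delta$. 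One of these fixes must be supplied.

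The second gap is the capacity estimate you explicitly defer, which is the heart of the proof rather than a routine obstacle. Your Schwarz-lemma suggestion does not apply as written, because the region enclosed by $\Gamma$ does \emph{not} contain $\overline{\DD}$ (it is pushed inside near $E$), so $\phi^{-1}$ is not a self-map of $\DD$ and no strict inequality $\phi'(0)>1$ follows directly. The paper's Lemma 3 is exactly the missing ingredient: using Runge's theorem it builds a rational function $R(z)=c_0+c_1z^{-1}+\cdots+z^{-m}$, monic in $1/z$, with $\max_{\Gamma(\delta)}|R|<\tfrac12$ uniformly for all small $\delta$, whence $\tau(\iota(\Gamma(\delta)))\le 2^{-1/m}<1$ and $d(\iota(\Gamma(\delta)))<1$ by the equality of the Chebyshev constant and the transfinite diameter. (This uniform bound is also what rescues the $d<\eta$ issue above.) Your Riemann-map alternative can be repaired, but only by adding a continuity or Caratheodory kernel-convergence argument in $\delta$, since Schwarz's lemma gives strict inequality only for the limiting curve $\Gamma(0)$ whose interior properly contains $\DD$.
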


If a power series with integer coefficients has convergence radius bigger than 1, then it must be a polynomial.

If the power series has convergence radius smaller than 1, one can find non-rational examples with integer coefficients which can be extended holomorphically out of their convergence domain([9]).

In the multivariate(dim=n) case, the convergence domain of a power series is a complete Reinhardt domain of holomorphy. Lelong([6]) showed that if the power series has integer coefficients and convergence domain containing the n-torus $T^n=\{(z_1,\dots,z_n)\in \mathbb{C}^n||z_j|=1,j=1,\dots,n\}$, then it must be a polynomial. If $T^n$ lies outside the convergence domain, then he found a power series with integer coefficients having its convergence domain as envelope of holomorphy.

If $T^n$ lies on the boundary of the convergence domain, we shall give a sufficient and nessesary condition for a power series with integer coefficients to be rational which can be viewed as a higher dimensional generalization of the theorem of Polya-Carlson. The following is our main theorem.

\begin{theorem}
Let $f(z_1,\dots,z_n)=\sum_{j_1\geq0,\dots,j_n\geq0}a_{j_1\dots j_n}z_1^{j_1}\dots z_n^{j_n}$ where $a_{j_1\dots j_n}\in \mathbb{Z}$. Assume that $T^n=\{(z_1,\dots,z_n)\in \mathbb{C}^n||z_j|=1,j=1,\dots,n\}$ lies on the boundary of the convergence domain $\Omega$ of f. Then f represents a rational fuction if and only if there exists positive integers $m_1,\cdots,m_{n-1}$such that for any $(\theta_1,\cdots,\theta_{n-1})$,there exists $z_0$ lying on the boundary of the unit disc and an open set U containing $(z_0,e^{i\theta_1}z_0^{m_1},\cdots,e^{i\theta_{n-1}}z_0^{m_{n-1}})$ and a holomorphic function F on U such that F=f on $\Omega \cap U$. 
\end{theorem}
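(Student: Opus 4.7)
My approach is to reduce both directions of the statement to the classical one-variable Polya-Carlson theorem by restricting $f$ to the one-parameter family of holomorphic curves
\[
\gamma_\theta(z) = \bigl(z,\ e^{i\theta_1} z^{m_1},\ \dots,\ e^{i\theta_{n-1}} z^{m_{n-1}}\bigr), \qquad z \in \DD,\ \theta \in \RR^{n-1}.
\]
Since $\Omega$ is a complete Reinhardt domain containing $T^n$ on its boundary, $\Omega$ contains the open polydisc, so $\gamma_\theta$ maps $\DD$ into $\Omega$ and $\partial\DD$ into $T^n$. Expanding the composition gives $g_\theta(z) := f(\gamma_\theta(z)) = \sum_k c_k(\theta)\, z^k$, where
\[
c_k(\theta) = \sum_{\substack{J \in \NN^n \\ j_1 + m_1 j_2 + \cdots + m_{n-1} j_n = k}} a_J\, e^{i(\theta_1 j_2 + \cdots + \theta_{n-1} j_n)}
\]
is a trigonometric polynomial in $\theta$ with integer coefficients. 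For the necessity direction, assuming $f = P/Q$ with $P, Q \in \ZZ[z_1,\ldots,z_n]$, I would choose $m_1,\ldots,m_{n-1}$ generic enough that distinct monomials of $Q$ map to distinct powers of $z$ under the substitution; then $Q\circ\gamma_\theta$ is a nonzero Laurent polynomial with only finitely many zeros on $\partial\DD$, and away from them one obtains the desired neighborhood on which $f$ is holomorphic.

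For the sufficiency direction, the key step is to apply Polya-Carlson to the slices $g_\theta$ at rational parameters $\theta \in (2\pi\mathbb{Q})^{n-1}$. Each such slice has radius of convergence at least $1$ and, by hypothesis, extends holomorphically past the unit circle at some point. When $\theta$ has common denominator $N$, the coefficients $c_k(\theta)$ all lie in the ring of integers $\ZZ[\zeta_N]$ of the cyclotomic field $\mathbb{Q}(\zeta_N)$; moreover, the Galois conjugate $\sigma_a \cdot g_\theta$ is itself the slice $g_{a\theta}$, which again satisfies the extension hypothesis. A Polya-Carlson theorem for algebraic integers in a number field (requiring that all Galois conjugates have convergence radius $\geq 1$ and extend across the boundary, a setup available in the work of Bertrandias, Cantor, or Dwork) then forces $g_\theta$ to be a rational function of $z$ for every $\theta \in (2\pi\mathbb{Q})^{n-1}$.

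The heart of the proof, and the main obstacle I anticipate, is promoting rationality on this dense set of slices to rationality of $f$ itself. The plan is to analyse the Hankel determinants $D_N(\theta) := \det\bigl(c_{i+j}(\theta)\bigr)_{0 \leq i,j \leq N}$, which are themselves trigonometric polynomials in $\theta$ with integer coefficients. Rationality of $g_\theta$ at rational $\theta$ yields $D_{N(\theta)}(\theta) = 0$ for some $N(\theta)$ depending on $\theta$; one must establish a uniform bound $\sup_\theta N(\theta) < \infty$, after which the vanishing of a single $D_N$ on a dense subset of $\RR^{n-1}$ forces $D_N \equiv 0$ as a trigonometric polynomial. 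Fourier decomposition in $\theta$ then extracts the individual coefficients $a_J$ via the orthogonality of characters on $T^{n-1}$ and transforms the identity $D_N \equiv 0$ into an explicit relation $Q f = P$ in $\ZZ[z_1,\ldots,z_n]$, giving the rationality of $f$. Securing the uniform bound on $N(\theta)$ --- presumably through an upper semicontinuity argument combined with the compactness of $T^{n-1}$, or by exploiting the coherent analytic continuation guaranteed by the extension hypothesis --- is where the real work lies.
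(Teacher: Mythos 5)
Your proposal takes a genuinely different route from the paper, and it has a real gap exactly where you locate it. The paper never invokes the one-variable Polya--Carlson theorem (over $\ZZ$ or over a number field) as a black box. Writing $P_v(z,w)=\sum_{j+nk=v}a_{jk}z^jw^k$, it works directly with the Hankel determinant $H_m(z)=\det\left(P_{v_1+v_2}(1,z)\right)_{v_1,v_2=0,\dots,m}$, which is a \emph{single polynomial with integer coefficients} in the variable $z=e^{i\theta}$. The extension hypothesis, fed into Szeg\H{o}'s contour $\Gamma(\delta)$ and the transfinite-diameter estimate $d(\iota(\Gamma(\delta)))<1$ for the inverted contour, bounds $|H_m(e^{i\theta})|$ by roughly $C^{m+1}\rho^{m(m+1)}$ locally in $\theta$; compactness of the circle of parameters then gives one $m_0$ with $|H_m|<1$ on all of $\partial \DD$ for $m>m_0$, forcing $H_m\equiv 0$ since its coefficients are integers. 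Kronecker's lemma then makes \emph{every} slice $g_\theta$ rational, and Siciak's lemma finishes. The point is that the vanishing of the Hankel determinant is obtained for all $\theta$ simultaneously, so no uniform bound on the complexity of individual slices is ever needed.

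By contrast, your plan stalls at exactly that bound. Neither of your proposed remedies works: the minimal $N(\theta)$ (equivalently, the degree of the denominator of $g_\theta$) is not upper semicontinuous in $\theta$ in any usable sense, and compactness buys nothing because your set of good parameters $(2\pi\mathbb{Q})^{n-1}$ is countable. For $n=2$ one can rescue the conclusion without a uniform bound by pigeonhole --- the increasing sets $S_N=\{\theta : D_M(\theta)=0 \ \forall M\geq N\}$ cover a countably infinite set, so some $S_N$ is infinite, and a one-variable trigonometric polynomial of bounded degree with infinitely many zeros vanishes identically --- but for $n\geq 3$ a multivariate trigonometric polynomial can vanish on a countable dense set without vanishing identically, and a countable union of proper zero sets can exhaust all rational parameters; the argument genuinely breaks. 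Moreover, even granting $D_N\equiv 0$, your endgame (``Fourier decomposition extracts $Qf=P$'') is not an established mechanism: the passage from rationality of slices to rationality of $f$ is precisely Siciak's lemma, which needs rationality for $\theta$ ranging over a \emph{product of uncountable sets}, and rational parameters do not supply that. Finally, the number-field Polya--Carlson you invoke requires a product-of-capacities hypothesis over all $\varphi(N)$ embeddings; your observation that $\sigma_a\cdot g_\theta=g_{a\theta}$ is correct and makes the hypothesis plausible, but a uniform lower bound on the boundary bump over all conjugates still requires a compactness argument in $\theta$ --- the very compactness with which the paper's more elementary Szeg\H{o}-type estimate already completes the proof. (Your necessity direction, choosing $m_1,\dots,m_{n-1}$ so that distinct monomials of the denominator stay distinct and $Q\circ\gamma_\theta$ has only finitely many zeros on $\partial\DD$, is fine; the paper does not even spell this direction out.)
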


\textbf{Remark1}:As can be illustrated by simple examples(e.g. $f(z,w)=\frac{\sum_{j=0}^{\infty}w^{j!}}{1-z}$), there are no straightforward ``dichotomy'' analogies of Polya-Carlson theorem in higher dimensions. Therefore, our theorem is just a criterion of rationality.

\textbf{Remark2}:When n=2, our theorem implies that f represents a rational function exactly when f can be extended holomorphically across \textit{one} point of $T^2$.

\textbf{Remark3}:There are also generalizations of Polya-Carlson theorem in higher dimensions([13]). However, at present, we do not know whether his conditions are nessesary for f to be rational or whether his theorem can imply ours.
\\
\\

As an application of our main theorem, we reprove the rationality theorem about D-finite power series(where `D' represents `Differential') established by Bell and Chen([1],[5]).

\begin{theorem}[Bell-Chen]
Let f be a D-finite power series with integer coefficients converging on the unit polydisc, then f represents a rational function.
\end{theorem}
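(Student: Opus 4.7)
The strategy is to invoke the main theorem (Theorem 2) for $f$, using D-finiteness to control where analytic continuation of $f$ can fail. First I would handle a quick reduction: the convergence domain $\Omega$ of $f$ is a complete Reinhardt domain containing the open unit polydisc, so either some point of $T^n$ lies in $\Omega$ -- in which case the rotation invariance of Reinhardt domains spreads this to all of $T^n$, and Lelong's theorem forces $f$ to be a polynomial (hence rational) -- or $T^n \cap \Omega = \emptyset$, so that $T^n \subseteq \overline{\Omega}\setminus\Omega = \partial\Omega$ and the hypothesis of Theorem 2 is met. I therefore assume the second case.

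Next I would exploit D-finiteness to locate the analytic singularities of $f$ inside a polynomial hypersurface. By the definition of D-finite, for each $i$ there is a nonzero operator $L_i = \sum_{k=0}^{d_i} a_{i,k}(z)\,\partial_{z_i}^k$ with polynomial coefficients in $z$ that annihilates $f$. Set $P := \prod_{i=1}^n a_{i,d_i}$, the product of the leading coefficients. The classical existence theorem for linear ODEs with holomorphic coefficients and non-vanishing leading term, applied one variable at a time and glued in the remaining variables, shows that $f$ extends to a holomorphic function on an open neighborhood of any $w \in \CC^n$ with $P(w) \neq 0$, agreeing with $f$ on $\Omega$ wherever both are defined. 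This provides precisely the local extension $F$ required in the hypothesis of Theorem 2 at every $w \in T^n \setminus V(P)$.

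It then remains to arrange, for a suitable choice of positive integers $m_1, \ldots, m_{n-1}$, that the family of monomial curves
$$z_0 \mapsto (z_0,\, e^{i\theta_1}z_0^{m_1},\, \ldots,\, e^{i\theta_{n-1}}z_0^{m_{n-1}}),\qquad |z_0|=1,$$
intersects $T^n \setminus V(P)$ for every $(\theta_1, \ldots, \theta_{n-1})$. Expand $P(z) = \sum_{\alpha \in S} c_\alpha z^\alpha$ with $S \subset \NN^n$ finite and $c_\alpha \neq 0$, and pick the $m_j$ so that the linear form $\ell(\alpha) := \alpha_1 + m_1 \alpha_2 + \cdots + m_{n-1} \alpha_n$ is injective on $S$; the explicit choice $m_j = M^j$ with $M$ strictly larger than every coordinate appearing in $S$ works, since then $\ell(\alpha)$ just reads off the base-$M$ digits of $\alpha$. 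Pulling $P$ back along the curve yields the Laurent polynomial
$$Q_\theta(z_0) \;=\; \sum_{\alpha \in S} c_\alpha\, e^{i(\alpha_2\theta_1 + \cdots + \alpha_n\theta_{n-1})}\, z_0^{\ell(\alpha)},$$
whose monomials sit in distinct degrees by injectivity of $\ell$; hence $Q_\theta \not\equiv 0$ for every $\theta$ and has only finitely many zeros on the unit circle. Picking any $z_0 \in \partial \DD$ with $Q_\theta(z_0) \neq 0$ produces a point of $T^n \setminus V(P)$ of the required form, so Theorem 2 applies and $f$ is rational.

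I expect the main obstacle to be the second paragraph -- a clean verification that the analytic singular set of a D-finite $f$ really does sit inside $V(P)$ and that the extension actually propagates from $\Omega$ across the distinguished boundary $T^n$. This combines the classical ODE existence theorem with a careful iterative one-variable-at-a-time continuation whose multi-variable bookkeeping needs attention. Everything else -- the Reinhardt reduction, the combinatorial choice of $m_j$, and the pigeonhole on the unit circle -- is short and essentially formal.
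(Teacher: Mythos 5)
Your proposal is correct and follows the same overall route as the paper's proof: continue $f$ holomorphically across boundary points of the polydisc where the leading coefficients of the defining ODEs do not vanish (via the classical existence theorem for holomorphic linear ODE systems, one variable at a time), choose the exponents $m_j$ so that every monomial curve in the main theorem's hypothesis meets such a point, and conclude by the main theorem.

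Two of your sub-arguments differ from, and in fact tighten, the paper's. First, you work with the product $P$ of \emph{all} leading coefficients at once; the paper only establishes the existence of a good $z_0$ for $p_r$, yet then shrinks to a bidisc on which both $p_r$ and $q_s$ are nonvanishing, tacitly needing $q_s(z_0,e^{i\theta}z_0^n)\neq 0$ as well — your version repairs that slip. Second, your choice of $m_j$ (making the weight $\ell(\alpha)=\alpha_1+m_1\alpha_2+\cdots+m_{n-1}\alpha_n$ injective on the support of $P$, so the pulled-back Laurent polynomial $Q_\theta$ has no cancellation and cannot vanish identically) is more explicit than the paper's argument that otherwise $p_r(\cdot,w_0)$ would have $n$ uniformly distributed zeros on $S^1$ for every $n$; note, though, that your argument requires polynomial coefficients (which the definition of D-finiteness supplies), whereas the paper's corollary is stated for coefficients merely holomorphic near $S^1\times S^1$. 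One caution on your second paragraph: the assertion that $f$ extends to a single-valued holomorphic function near \emph{every} $w$ with $P(w)\neq 0$ is too strong as stated, since D-finite functions can have monodromy around $V(P)$; but you only need the extension across points of $T^n\setminus V(P)$ reached by short paths from inside the polydisc, and that is exactly what the paper's ODE existence lemma, applied one variable at a time as you describe, provides. Your opening Reinhardt/Lelong reduction, handling the case where $T^n$ is not on the boundary of the convergence domain, is a worthwhile addition that the paper leaves implicit.
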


\textbf{Preliminaries}:
\\
\\
We firstly recall some basic definitions[4].

\begin{definition}[Transfinite diameter]
Let K be a compact subset in $\mathbb{C}$, n be a positive integer. Let 

$V_n(K)=max\{\prod_{i<j}|z_i-z_j|:z_1,\dots,z_n\in K\}$

$d_n:=V_n(K)^{\frac{2}{n(n-1)}}$, we call $d(K):=lim_{n\to \infty}d_n$ the transfinite diameter of K.
\end{definition}

\begin{definition}[Cebysev constant]
Let K be a compact subset in $\mathbb{C}$, n be a positive integer. Let

$M_n(K)=inf\{max_{z\in K}|P(z)|:P(z)=z^n+\sum_{j=0}^{n-1}a_{j}z^j, a_0,\dots,a_{n-1}\in \mathbb{C}\}$

Let $\tau_n:=M_n(K)^{\frac{1}{n}}$, we call $\tau(K):=lim_{n\to \infty}\tau_n$ the Cebysev constant of K.
\end{definition}

It is well-known that $d(K)=\tau(K)$ for any K.
\\

\begin{definition}[D-finite power series]([7][12])
A formal power series $f(x_1,\dots,x_d)$ is called D-finite, if all the partial derivatives of f span a finite dimensional vector space over rational function field. Equivalently, for every $i\in \{1,\dots,d\}$, there exists a positive integer $m_i$ and polynomials $p_{i,j}, ~j\in\{1,\dots,m_i\}$ which are not all zero, such that f satisfies the following linear partial differential equations:

$p_{i,m_i}D_{x_i}^{m_i}f+\dots+p_{i,0}f=0,  i\in\{1,\dots,d\}$

\end{definition}

\textbf{Proof of the main theorem}:
\\

\begin{lemma}[Kronecker]([4])
Let $f(z)=\sum_{j=0}^{\infty}a_jz^j$ be a formal power series, where $a_j\in \mathbb{C}$.Then f is rational if and only if for any sufficiently large n we have $A_n=det(a_{i+j})_{i,j=0,\dots,n}=0$.
\end{lemma}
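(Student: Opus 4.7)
The plan is to treat the two implications of the lemma separately. For the easy direction ($\Rightarrow$), I would write $f=P/Q$ in lowest terms with $Q(z)=\sum_{k=0}^{d}q_{k}z^{k}$, $q_{d}\neq 0$, and $\deg P<d$. Comparing coefficients of $z^{n}$ in the identity $Qf=P$ for $n\geq d$ yields the linear recurrence $\sum_{k=0}^{d}q_{k}a_{n-k}=0$. Consequently, the columns $C_{j}=(a_{j},a_{j+1},\dots)^{T}$ of the infinite Hankel matrix $H=(a_{i+j})_{i,j\geq 0}$ satisfy $C_{j}\in\operatorname{span}(C_{j-1},\dots,C_{j-d})$ for every $j\geq d$, so $\operatorname{rank}(H)\leq d$. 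Every principal submatrix of size larger than $d$ is therefore singular, giving $A_{n}=0$ for all $n\geq d$.

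For the converse ($\Leftarrow$), assume $A_{n}=0$ for $n\geq N$ and (discarding the trivial case $f\equiv 0$) let $d$ be the smallest integer such that $A_{n}=0$ for every $n\geq d$; minimality forces $A_{d-1}\neq 0$. Because the $d\times d$ block $M_{d-1}=(a_{i+j})_{i,j=0}^{d-1}$ is nonsingular, the first $d$ columns of the $(d+1)\times(d+1)$ matrix $M_{d}$ are linearly independent, so the vanishing of $\det M_{d}$ produces unique scalars $c_{0},\dots,c_{d-1}$ with
\[
a_{i+d}=\sum_{k=0}^{d-1}c_{k}a_{i+k}\qquad(i=0,1,\dots,d).
\]
I would then extend this recurrence to all $i\geq 0$ by induction on $i$: assuming it holds for $i<I$ with $I>d$, the hypothesis $A_{I}=0$ combined with the rigidity supplied by the nonsingular sub-block $M_{d-1}$ forces the relation to propagate to row $I$, yielding $a_{I+d}=\sum_{k=0}^{d-1}c_{k}a_{I+k}$. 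Once the recurrence is known for every $i\geq 0$, setting $Q(z):=1-\sum_{k=0}^{d-1}c_{k}z^{d-k}$ makes $Q(z)f(z)$ a polynomial of degree less than $d$, exhibiting $f$ as rational.

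The main obstacle is precisely this inductive propagation. A priori, the vanishing of $A_{I}$ only provides \emph{some} linear dependence among the columns of $M_{I}$, and one must argue that this dependence is governed by the same fixed coefficients $c_{0},\dots,c_{d-1}$. The nonsingular block $M_{d-1}$ sitting in the upper-left corner of every $M_{n}$ supplies the required rigidity: any linear combination expressing a column of $M_{n}$ in terms of the first $d$ columns is uniquely determined by its restriction to the top $d$ rows, and must therefore coincide with our fixed combination. Verifying this carefully—ensuring that the column-dependence visible in the top $d$ rows is inherited by the new bottom row introduced when passing from $M_{I-1}$ to $M_{I}$—is the only non-routine step of the proof.
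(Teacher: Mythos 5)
The paper itself offers no proof of this lemma — it is quoted from [4] — so I am comparing your argument with the standard one rather than with anything in the text. Your forward direction is fine (modulo the harmless normalization issue that a rational power series need not satisfy $\deg P<\deg Q$; one should instead normalize $Q(0)\neq 0$, and the recurrence then holds for all sufficiently large $n$, which is all the statement requires). Your setup for the converse — taking $d$ minimal with $A_n=0$ for all $n\geq d$, so $A_{d-1}\neq 0$, extracting unique $c_0,\dots,c_{d-1}$ from $\det M_d=0$ via the invertible block $M_{d-1}$, and inducting on $I$ — is exactly the classical strategy, and the final step producing $Q$ is correct.

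The gap is in the one step you yourself flag as non-routine, and the mechanism you propose does not close it. From $A_I=0$ you only learn that the $I+1$ columns of $M_I$ admit \emph{some} dependence; nothing forces that dependence to be supported on the first $d$ columns together with one other, so there is no column relation to which the "uniqueness from the top $d$ rows" observation can be applied. Nor can you isolate the new coefficient by cofactor expansion: the cofactor of the corner entry $a_{2I}$ in $M_I$ is $A_{I-1}=0$, so $A_I$ does not even depend on the newest entries taken one at a time. What actually works is a row reduction. Set $u_m:=a_{m+d}-\sum_{k=0}^{d-1}c_k a_{m+k}$, so the induction hypothesis is $u_m=0$ for $m\leq I-1$. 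In $M_I$ replace, for $i=d,\dots,I$, the row $R_i$ by $R_i-\sum_{k=0}^{d-1}c_k R_{i-d+k}$; the new $(i,j)$ entry is $u_{i+j-d}$, which vanishes unless $i+j\geq I+d$. The reduced matrix is block triangular, with $M_{d-1}$ in the upper-left corner, a zero block below it, and a lower-right block that is anti-triangular with $u_I$ along its anti-diagonal. Hence
\[
A_I=\pm A_{d-1}\,u_I^{\,I-d+1},
\]
and $A_{d-1}\neq 0$ together with $A_I=0$ forces $u_I=0$, which is exactly the relation you need. (The same computation with $d=0$ shows that if \emph{all} $A_n$ vanish then $f\equiv 0$, which justifies discarding that case.) With this computation inserted, your proof is complete and agrees with the standard argument cited by the paper.
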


\begin{lemma}[Siciak 1962]([11])
Let $f(z_1,\dots,z_n)$ be a holomorphic function on a polydisc centered at $(0,\dots,0)$. Given positive integers $m_1,\dots,m_{n-1}$, let $g(t,\theta_1,\dots,\theta_{n-1})=f(t,e^{i\theta_1}t^{m_1},\dots,e^{i\theta_{n-1}}t^{m_{n-1}})$,
if there exists uncountable sets $E_j$ in $\mathbb{R},~ j=1,\dots,n-1$, such that $g(t,\theta_1,\dots,\theta_{n-1})$ is a rational function for $(\theta_1,\dots,\theta_{n-1}) \in E_1\times\dots\times E_{n-1}$, then f is rational.
\end{lemma}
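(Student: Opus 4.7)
The plan is to apply Kronecker's criterion (Lemma~1) twice: first pointwise, to leverage the rationality of $g(\cdot, u)$ for $u$ in the uncountable parameter set, and then generically over a function field after upgrading the pointwise information to a polynomial identity. Setting $u_j = e^{i\theta_j}$, the sets $E_j' := \{e^{i\theta} : \theta \in E_j\}$ remain uncountable in $\mathbb{C}$. Expanding $f(z) = \sum_\alpha c_\alpha z^\alpha$ gives
$$g(t, u_1, \ldots, u_{n-1}) := f(t, u_1 t^{m_1}, \ldots, u_{n-1} t^{m_{n-1}}) = \sum_{k \geq 0} b_k(u)\, t^k,$$
where $b_k(u) = \sum_{\alpha_0 + m_1 \alpha_1 + \cdots + m_{n-1}\alpha_{n-1} = k} c_\alpha u_1^{\alpha_1} \cdots u_{n-1}^{\alpha_{n-1}}$. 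The critical observation is that the positivity of the $m_j$'s restricts the summation to finitely many multi-indices, so each $b_k$ and hence each Hankel determinant $A_N(u) := \det\bigl(b_{i+j}(u)\bigr)_{0 \leq i,j \leq N}$ is actually a polynomial in $u_1, \ldots, u_{n-1}$.

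By hypothesis $g(\cdot, u)$ is rational in $t$ for every $u \in E_1' \times \cdots \times E_{n-1}'$, so Lemma~1 supplies an integer $N_0(u) < \infty$ with $A_N(u) = 0$ for all $N \geq N_0(u)$. I would then promote this pointwise vanishing to vanishing as polynomials by iterated pigeonhole on coordinates. Fix $(u_2, \ldots, u_{n-1}) \in E_2' \times \cdots \times E_{n-1}'$; pigeonholing on $u_1 \in E_1'$ against the integer-valued $N_0$ yields an uncountable $F_1 \subseteq E_1'$ and an integer $N_1 = N_1(u_2, \ldots, u_{n-1})$ such that $N_0 \leq N_1$ on $F_1$, so for $N \geq N_1$ the polynomial $A_N(\cdot, u_2, \ldots, u_{n-1})$ in $u_1$ vanishes on the infinite set $F_1$ and hence identically in $u_1$. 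Next, pigeonholing on $u_2 \in E_2'$ produces an uncountable $F_2$ with $N_1$ bounded by some $N_2(u_3, \ldots, u_{n-1})$, and the polynomial $A_N \in \mathbb{C}[u_1, u_2]$ (with $u_3, \ldots, u_{n-1}$ fixed) vanishes on $\mathbb{C} \times F_2$, so is identically zero. Iterating this step $n-1$ times yields a uniform $N^\ast$ with $A_N \equiv 0$ in $\mathbb{C}[u_1, \ldots, u_{n-1}]$ for all $N \geq N^\ast$.

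Once the $A_N$ vanish as polynomials, the algebraic version of Kronecker's criterion applied over the field $\mathbb{C}(u_1, \ldots, u_{n-1})$ gives $g \in \mathbb{C}(u_1, \ldots, u_{n-1})(t) = \mathbb{C}(t, u_1, \ldots, u_{n-1})$. Reversing the substitution via $t = z_1$ and $u_j = z_{j+1}/z_1^{m_j}$ produces
$$f(z_1, \ldots, z_n) = g\bigl(z_1,\, z_2/z_1^{m_1},\, \ldots,\, z_n/z_1^{m_{n-1}}\bigr),$$
which, after clearing the $z_1$-denominators from numerator and denominator, is plainly a rational function in $\mathbb{C}(z_1, \ldots, z_n)$; since both sides agree on a nonempty open subset of the polydisc of holomorphy of $f$, the identity theorem extends the equality throughout.

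The main obstacle is the iterated pigeonhole step: because an uncountable subset of the product $E_1' \times \cdots \times E_{n-1}'$ need not project uncountably to each factor, one cannot directly upgrade pointwise to polynomial-identity vanishing in a single move. The remedy is to alternate pigeonhole (to bound the integer $N_0$ on an uncountable slice in one coordinate) with the elementary fact that a univariate polynomial vanishing on an infinite set is zero (to propagate vanishing to all of $\mathbb{C}$ in that coordinate), one variable at a time.
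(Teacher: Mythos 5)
The paper does not actually prove this lemma --- it cites Siciak [11] and merely remarks that his proof for $m_1=\cdots=m_{n-1}=1$ carries over --- and your argument is a correct, self-contained reconstruction of exactly that classical proof: the Hankel determinants of the $t$-series are polynomials in $u=(e^{i\theta_1},\dots,e^{i\theta_{n-1}})$ because the $m_j$ are positive, one pigeonholes the integer $N_0(u)$ over the uncountable parameter sets one coordinate at a time (correctly avoiding the trap that an uncountable subset of a product need not be Zariski dense), and then applies Kronecker over $\mathbb{C}(u_1,\dots,u_{n-1})$ before undoing the substitution $u_j=z_{j+1}/z_1^{m_j}$. The one step worth flagging is your appeal to Kronecker's criterion over the function field $\mathbb{C}(u_1,\dots,u_{n-1})$, whereas the paper's Lemma (Kronecker) is stated only over $\mathbb{C}$; this generalization is standard since the Hankel-rank argument is purely linear-algebraic and field-agnostic, but it should be stated explicitly or referenced.
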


In Siciak's work, $m_1=m_2=\dots=m_{n-1}=1$. The same proof works for the case here.

In the following proof of the main theorem, we follow the strategies used in the proof of the classical Szego's theorem in chapter 11 of [10].

Given $\varphi$, $\psi$ and $s>1$, such that $0<\varphi-\psi<2\pi$. For any $0\leq \delta<1$, define $\Gamma(\delta)$ be the following simple closed curve:
\begin{equation*}
\left\{ \begin{aligned}
	\text{arc:} \quad & (1-\delta)e^{it}, & t\in[\varphi,\psi+2\pi]; \\
	\text{line segment:} \quad & \rho e^{i\varphi}, & \rho\in[1-\delta,s]; \\
	\text{line segment:} \quad & \rho e^{i\psi} & \rho\in[1-\delta,s]; \\
	\text{arc:} \quad & se^{it} & t\in[\psi,\varphi].
\end{aligned} \right.
\end{equation*}

For the sake of convinience, denote $\iota(z)=\frac{1}{z}$ for nonzero z.

\begin{lemma} \label{Lemma:iota}
There exists $\delta_{0}>0$, such that for any $0<\delta<\delta_{0}$, we have $d(\iota(\Gamma(\delta))) < 1$.
\end{lemma}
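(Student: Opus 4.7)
The plan is to identify the bounded region enclosed by $\Gamma(\delta)$, transform the problem via $\iota$ into a statement about conformal radii, and then conclude by Carath\'eodory convergence as $\delta\to 0^+$. A winding-number computation about $0$ together with a ray-crossing check shows that $\Gamma(\delta)$ is a Jordan curve bounding the simply connected ``keyhole'' region
\[
D(\delta) \;=\; \{|z| < 1-\delta\} \;\cup\; \{|z| < s,\; \arg z \in (\psi,\varphi)\},
\]
which contains the origin in its interior.

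Since $\iota$ is an involution of the Riemann sphere interchanging $0$ and $\infty$, the image $\iota(\Gamma(\delta))$ is a Jordan curve in $\CC$ whose unbounded complementary component is $\iota(D(\delta)\setminus\{0\})$. Let $\tilde\psi_\delta\colon\DD\to D(\delta)$ be the Riemann map with $\tilde\psi_\delta(0)=0$ and $\tilde\psi_\delta'(0)>0$. A direct check shows that $w\mapsto 1/\tilde\psi_\delta(1/w)$ is a conformal map from $\{|w|>1\}$ onto this unbounded component, with Laurent expansion $w/\tilde\psi_\delta'(0)+O(1)$ at $\infty$. Since the transfinite diameter of a compact subset of $\CC$ equals the conformal radius of its exterior at $\infty$ (equivalently, the logarithmic capacity), this yields
\[
d(\iota(\Gamma(\delta))) \;=\; \frac{1}{\tilde\psi_\delta'(0)}.
\]

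It remains to show $\tilde\psi_\delta'(0)>1$ for all sufficiently small $\delta>0$. The family $\{D(\delta)\}_{\delta>0}$ is nested---$D(\delta)\subset D(\delta')$ whenever $\delta>\delta'$---with union $D(0):=\DD\cup\{|z|<s,\;\arg z\in(\psi,\varphi)\}$. The Carath\'eodory kernel theorem therefore gives $\tilde\psi_\delta'(0)\to\tilde\psi_0'(0)$ as $\delta\to 0^+$. Since $s>1$, the domain $D(0)$ strictly contains $\DD$, so applying Schwarz's lemma to the restriction $\tilde\psi_0^{-1}|_{\DD}\colon\DD\to\DD$ (which fixes $0$) forces $|(\tilde\psi_0^{-1})'(0)|<1$, i.e., $\tilde\psi_0'(0)>1$. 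Consequently $\tilde\psi_\delta'(0)>1$ for all $\delta$ in some interval $(0,\delta_0)$, and $d(\iota(\Gamma(\delta)))<1$ follows.

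The main technical hurdle is the initial geometric step: because $\Gamma(\delta)$ simultaneously traces the ``long'' arc on the inner circle (covering $[\varphi,\psi+2\pi]$) and the ``short'' arc on the outer circle (covering $[\psi,\varphi]$), it is not immediately obvious that the bounded side of the curve contains the origin and has the keyhole shape described above. Once $D(\delta)$ is correctly pinned down, the remaining steps---Carath\'eodory convergence together with the Schwarz lemma---are standard.
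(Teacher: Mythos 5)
Your argument is correct, and it reaches the conclusion by a genuinely different route than the paper. The paper works directly with the Chebyshev-constant side of the identity $d=\tau$: using Runge's theorem it produces a single monic ``polynomial in $1/z$'', $R(z)=c_0+\cdots+1/z^m$, with $|R|<\tfrac12$ on a fixed neighborhood of $\Gamma(0)$, hence on every $\Gamma(\delta)$ with $\delta<\delta_0$; taking powers of $R$ gives $M_{lm}(\iota(\Gamma(\delta)))<2^{-l}$ and so $\tau(\iota(\Gamma(\delta)))\le 2^{-1/m}<1$. You instead pass through the potential-theoretic identity (transfinite diameter $=$ logarithmic capacity $=$ conformal radius of the unbounded complementary component at $\infty$), which converts the statement into $\tilde\psi_\delta'(0)>1$ for the Riemann map of the keyhole domain $D(\delta)$, and then finish with the Carath\'eodory kernel theorem and the strict case of Schwarz's lemma; your identification of $D(\delta)$, of the unbounded component of the complement of $\iota(\Gamma(\delta))$, and of the expansion $w/\tilde\psi_\delta'(0)+O(1)$ are all correct, and the limit argument is genuinely needed since $D(\delta)$ does not contain $\DD$ for $\delta>0$. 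What each approach buys: yours is conceptually cleaner and sharper---it computes $d(\iota(\Gamma(\delta)))$ exactly as $1/\tilde\psi_\delta'(0)$ and shows it tends to $1/\tilde\psi_0'(0)<1$---but leans on the Fekete--Szeg\H{o} equivalence of transfinite diameter and capacity and on the kernel theorem; the paper's construction is more elementary (only $d=\tau$ and Runge) and delivers an explicit bound $2^{-1/m}$ that is manifestly uniform over all $0<\delta<\delta_0$, which is convenient though not strictly needed downstream, where a single $\delta$ suffices.
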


\begin{proof}
By Runge's approximation theorem, there exists a polynomial
\begin{equation*}
	Q_{1}(z)=1+b_{k-1}z+\cdots+b_{0}z^{k},
\end{equation*}
such that $Q_{1}(z)<\frac{1}{2}$ when $z\in \Gamma(0)$ and $|z|=1$. Let $Q_{2}(z)=\dfrac{Q_{1}(z)}{z^{k}}$, there exists an open neighborhood $U$ of $\{|z|=1\}\cap \Gamma(0)$, such that$|Q_{2}|_{U}<\frac{1}{2}$. Let $r>1$ such that $\Gamma(0)\cap B_{r}(0)$ is contained in~$U$. Choose a positive integer $l$ such that $Q_{3}(z)=\dfrac{Q_{2}(z)}{z^{l}}$ satisfies
\begin{equation*}
	|Q_{3}(z)|<\frac{1}{2}, \quad \forall z\in \Gamma(0) \text{and } |z|>r.
\end{equation*}
Hence $|Q_{3}|_{\Gamma(0)}<\frac{1}{2}$. Choose an open neighborhood V of $\Gamma(0)$ such that $|Q_{3}|_{V}<\frac{1}{2}$, and choose $\delta_{0}>0$ such that~$\Gamma(\delta)$ is contained in~$V$, when $\delta<\delta_{0}$.Therefore we can find
\begin{equation*}
R(z) = c_{0}+c_{1}\frac{1}{z}+\cdots+c_{m-1}\frac {1}{z^{m-1}}+\frac{1}{z^{m}},
\end{equation*}
such that
\begin{equation*}
	\max_{z\in \Gamma(\delta)} |R(z)|< \frac{1}{2}, \quad \forall 0<\delta<\delta_0.
\end{equation*}
Hence for any positive integer $l$,
\begin{equation*}
M_{lm}(\iota(\Gamma(\delta)))<\frac{1}{2^{l}}.
\end{equation*}
Since $lim_{n\to \infty}\tau_n$ exists, we have $\tau(\iota(\Gamma(\delta)))\leq \frac{1}{2^{\frac{1}{m}}}$. This completes the proof because $d(\iota(\Gamma(\delta))) =\tau(\iota(\Gamma(\delta)))$.
\end{proof}

Now we just prove the main theorem in dimension two, because the proof of the general case is similar.

\begin{proof}
We follow the notations in Siciak's lemma. Let
\begin{equation*}
	P_{v}(z,w)=\sum_{j+nk=v}a_{jk}z^{j}w^{k},
\end{equation*}
then $f(z,w)=\sum_{v=0}^{\infty}P_{v}(z,w)$. Let
\begin{equation*}
	g_{\theta}(z)=f(z,e^{i\theta}z^{n})=\sum_{v=0}^{\infty}P_{v}(1,e^{i\theta})z^{v}.
\end{equation*}
For any positive integer $m$, denote
\begin{equation*}
	H_{m}(z) = \det(P_{v_{1}+v_{2}}(1,z))_{v_{1},v_{2}=0,1,\cdots,m}.
\end{equation*}

\textbf{Claim}: For$m$ large enough, we have$|H_{m}(z)|<1$ for any $z\in\partial D$.

Firstly suppose the claim holds. Since $a_{jk}$ are integers,  $H_{m}(z)$is a polynomial with integer coefficients. By the maximum principle for holomorphic functions,
$H_{m}(z)$ is identically 0. In particular, for any~$\theta$, $H_{m}(e^{i\theta})=0$. By Kronecker's lemma, for any~$\theta$, $g_{\theta}$~is a rational function. By Siciak's lemma, $f$is a rational function.
\\

We now prove the claim:
\\

For any $\theta_{0}$, there exists $z_{0}\in\partial D$ and an open neighborhood U of $(z_{0},e^{i\theta_{0}}z_{0}^{n})$, such that $f$ can be extended holomorphically to  $U$. Hence there exists $\epsilon_{0}>0$, and an open neighborhood $\Omega_{0}$ of $z_{0}$, such that for any $\theta_0-\epsilon_0<\theta<\theta_0+\epsilon_0$,  $g_{\theta}$ can be extended holomorphically from $D$ to $\Omega_{0}$. Following the notations in lemma \ref{Lemma:iota}, there exists~$\varphi,\psi,s>1,\delta>0$ such that the simple closed curve $\Gamma(\delta)$ is contained in $D\cup \Omega_{0}$, and $d(\iota(\Gamma(\delta)))<1$. By Cauchy's integral formula, for any $v\geq0$,
\begin{equation*}
P_{v}(1,e^{i\theta})=\frac{1}{2\pi i}\oint_{\Gamma(\delta)}g_{\theta}(z)\frac{1}{z^{v+1}}dz.
\end{equation*}
By the definition of determinant,
\begin{multline*}
H_{m}(e^{i\theta})
= \frac{1}{(2\pi i)^{m+1}}\oint_{\Gamma(\delta)}\cdots\oint_{\Gamma(\delta)} \frac{g_{\theta}(z_{1})}{z_{1}} \frac{g_\theta(z_2)}{z_{2}^{2}}\cdots\frac{g_{\theta}(z_{m+1})}{z_{m+1}^{m+1}} \\
\det\left(\frac{1}{z_{k}^{j-1}}\right)_{j,k=1,2,\cdots,m+1}dz_{1}\cdots dz_{m+1}.
\end{multline*}
If we take a permutation of the indices $1,\cdots,m+1$, the equality above still hold true. Take the sum of the equality above for all the permutations, we get
\begin{multline} 
(m+1)!H_{m}(e^{i\theta})=\frac{1}{(2\pi i)^{m+1}}\oint_{\Gamma(\delta)}\cdots\oint_{\Gamma(\delta)}g_{\theta}(z_{1})\cdots g_{\theta}(z_{m+1})\frac{1}{z_{1}\cdots z_{m+1}} \\
\left(\det\left(\frac{1}{z_{k}^{j-1}}\right)_{j,k=1,2,\cdots,m+1}\right)^{2}dz_{1}\cdots dz_{m+1}. \label{Eq:4-1}
\end{multline}
Note that
\begin{align*}
& \max_{z_{1},\cdots,z_{m+1}\in\Gamma(\delta)} \left(\det\left(\frac{1}{z_{k}^{j-1}}\right)_{j,k=1,2,\cdots,m+1}\right)^{2} \\
= & \max_{z_{1},\cdots,z_{m+1}\in\Gamma(\delta)} |\prod_{j<k}(\frac{1}{z_{j}}-\frac{1}{z_{k}})|^{2} \\
= & (V_{m+1}(\iota (\Gamma(\delta))))^{m(m+1)}.
\end{align*}
where we follow the notations in the definition of transfinite diameter. Since $d(\iota(\Gamma(\delta)))<1$, we have for m large enough, $V_{m+1}(\iota(\Gamma(\delta)))<\rho<1$. Let $L(\Gamma(\delta))$denotes the length of $\Gamma(\delta)$, $\eta=\min_{z\in
\Gamma(\delta)}|z|>0$, $M=sup\{|g_{\theta}(z)||z\in \Gamma(\delta), \theta_0-\epsilon_0<\theta<\theta_0+\epsilon_0\}<+\infty$. By(\ref{Eq:4-1}), for$m$large enough and $\theta_0-\epsilon_0<\theta<\theta_0+\epsilon_0$ we have
\begin{equation*}
|H_{m}(e^{i\theta})|<\frac{1}{(2\pi)^{m+1}}\frac{1}{(m+1)!}(L(\Gamma(\delta)))^{m+1}M^{m+1}\frac{1}{\eta
^{m+1}}\rho ^{m(m+1)}.
\end{equation*}
We can choose $m$ large enough, such that the right hand side of the inequality above is smaller than 1. Hence we can choose~$m_{0}>0$, such that when $m>m_{0}$, we have $|H_{m}(e^{i\theta})|<1$ for any $\theta$. This completes the proof of the claim.
\end{proof}

Now we give a new proof of Bell-Chen's rationality theorem.

\begin{lemma}([3])
\label{Lemma:ODEExistence}
Let$D$ denotes the disc of radius R centered at 0 in the complex plane, where~$R$>0~(can be infinity). Let~$A(z)$~ be a holomorphic $n\times n$ matrix on D, then for any~$w_{0}\in\mathbb{C}^n$, there exists an unique holomorphic vector-valued function~$w(z)$~on D,such that
\begin{equation} \label{Eq:1-1}
w'(z)=A(z)w(z) \quad (\forall z\in D),
\end{equation}
and
\begin{equation} \label{Eq:1-2}
w(0)=w_{0}.
\end{equation}
\end{lemma}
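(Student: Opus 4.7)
The plan is to construct the solution on $D$ by Picard iteration, performing the estimate on each compact subdisc and then patching via uniqueness. Working one $r<R$ at a time sidesteps any worry about $R=\infty$.

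First I would fix $0<r<R$ and set $M_r:=\sup_{|z|\le r}\|A(z)\|$, which is finite because $A$ is continuous on the compact set $\{|z|\le r\}$. Define inductively $w^{(0)}(z)\equiv w_0$ and $w^{(k+1)}(z):=w_0+\int_0^z A(\zeta)w^{(k)}(\zeta)\,d\zeta$, integrated along the straight segment from $0$ to $z$. Each $w^{(k)}$ is holomorphic on $\{|z|<r\}$ (by induction and the fact that a path integral of a holomorphic integrand depends holomorphically on the upper limit), and an easy induction on $k$ yields the bound $\|w^{(k+1)}(z)-w^{(k)}(z)\|\le \frac{(M_r|z|)^{k+1}}{(k+1)!}\|w_0\|$. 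Consequently the telescoping series $w_0+\sum_{k\ge 0}(w^{(k+1)}-w^{(k)})$ converges absolutely and uniformly on $\{|z|\le r\}$ to a holomorphic limit $w$ (Weierstrass), which satisfies the integral equation $w(z)=w_0+\int_0^z A(\zeta)w(\zeta)\,d\zeta$; differentiating gives $w'(z)=A(z)w(z)$ and $w(0)=w_0$.

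For uniqueness, suppose $w_1,w_2$ are two such solutions and put $u:=w_1-w_2$. Then $u(0)=0$ and $u(z)=\int_0^z A(\zeta)u(\zeta)\,d\zeta$, so iterating this identity $k$ times on $\{|z|\le r\}$ gives $\|u(z)\|\le \frac{(M_r r)^k}{k!}\sup_{|\zeta|\le r}\|u(\zeta)\|$ for every $k$, and letting $k\to\infty$ forces $u\equiv 0$ on $\{|z|\le r\}$. Uniqueness then lets me patch: for $r<r'<R$, the restriction of the solution on $\{|z|<r'\}$ to $\{|z|<r\}$ agrees with the solution constructed there, so the family glues to a single holomorphic solution on $D=\bigcup_{r<R}\{|z|<r\}$.

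There is really no substantive obstacle; this is the standard Picard--Lindel\"of scheme transplanted to the holomorphic category, and its only subtleties are checking that the straight-segment integral makes sense, that the uniform limit of the iterates is holomorphic, and that the factorial denominator in the bound kills the geometric growth. The case $R=\infty$ requires no separate treatment, since one always works on a fixed finite $r$.
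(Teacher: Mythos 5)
Your Picard-iteration argument is correct and complete: the straight-segment parametrization gives exactly the $\tfrac{(M_r|z|)^{k+1}}{(k+1)!}$ factorial gain needed for uniform convergence on $\{|z|\le r\}$, Weierstrass's theorem preserves holomorphy in the limit, the same iteration kills any difference of two solutions, and exhausting $D$ by the discs $\{|z|<r\}$ handles $R=\infty$ with no extra work. The paper itself does not prove this lemma at all --- it simply cites Forster's \emph{Lectures on Riemann Surfaces} [3], where the local existence theorem for linear holomorphic ODEs is established by an entirely standard argument of the same flavor, so there is no substantive divergence to report. One remark worth adding: in the proof of Corollary \ref{Cor2} the paper invokes not just the lemma as stated but the fact (``according to the proof of lemma \ref{Lemma:ODEExistence}'') that the solution depends holomorphically on auxiliary parameters. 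Your construction delivers this for free --- if $A$ and $w_0$ depend holomorphically on a parameter $z$, every iterate $w^{(k)}$ does too, and the locally uniform limit inherits joint holomorphy --- so it would strengthen your write-up to state that corollary explicitly, since it is the form of the lemma the paper actually uses.
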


In the following, we just prove the 2-dim case of Bell-Chen's theorem, because the the higher dimensional case is paralled.

\begin{corollary} \label{Cor2}
Let $D$ be the unit disc on the complex plane, $S^1$ be its boundary. Let$f(z,w)$ be a power series with integer coefficients which is convergent on$D\times D$, and f(z,w) satisfies the following linear differential equation:
\begin{equation} \label{Eq:2-1}
p_{0}f+p_{1}f_{z}+\cdots+p_{r}f^{(r)}_{z}=0,
\end{equation}
\begin{equation} \label{Eq:2-2}
q_{0}f+q_{1}f_{w}+\cdots+q_{s}f^{(s)}_{w}=0.
\end{equation}
where $p_{j}$ $(j=1,\cdots,r)$ and $q_{k}$ $(k=1,\cdots,s)$ are holomorphic functions on a connected open neighborhood U of $S^1\times S^1$, and $p_{r}q_{s}$ is not identically 0 on U. Then$f$ is a rational function.
\end{corollary}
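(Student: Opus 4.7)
By the complete Reinhardt symmetry of the convergence domain $\Omega$ of $f$, and the fact that $T^2$ is a single orbit of the standard torus action on $\mathbb{C}^2$, either $T^2 \subset \Omega$, in which case Lelong's theorem cited in the introduction shows that $f$ is already a polynomial, or $T^2 \subset \partial \Omega$. In the latter case the plan is to verify the hypothesis of the main theorem: by Remark 2 it is enough to extend $f$ holomorphically across a single point of $T^2$, and \eqref{Eq:2-1}--\eqref{Eq:2-2} will produce such an extension. First locate a good base point. Since $p_r q_s$ is holomorphic on the connected open set $U$ and is not identically zero there, and since $T^2$ is a maximal totally real submanifold of $\mathbb{C}^2$, $p_r q_s$ cannot vanish on all of $T^2$: otherwise a holomorphic function vanishing on a maximal totally real submanifold would vanish on an open neighborhood of $T^2$, and hence on all of $U$ by the identity theorem. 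Choose $(z_0, w_0) \in T^2$ with $p_r(z_0, w_0)\, q_s(z_0, w_0) \neq 0$, and open discs $B_1 \ni z_0$, $B_2 \ni w_0$ with $B_1 \times B_2 \subset U$ on which both $p_r$ and $q_s$ are nowhere vanishing.

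The extension proceeds in two stages. Dividing \eqref{Eq:2-1} by $p_r$ and setting $\vec{F} = (f, f_z, \ldots, f_z^{(r-1)})^T$ turns it into a first-order holomorphic linear system $\vec{F}_z = A(z,w)\vec{F}$ on $B_1 \times B_2$. Fix $z_1 \in B_1 \cap D$; for each $w \in B_2 \cap D$ the initial vector $\vec{F}(z_1, w)$ is defined from the given $f$ and is holomorphic in $w$. A parametric version of Lemma~\ref{Lemma:ODEExistence}---the Picard iterates are jointly holomorphic in $(z,w)$ and converge uniformly on compact subsets of $B_1 \times (B_2 \cap D)$---produces a jointly holomorphic extension of $f$ to $B_1 \times (B_2 \cap D)$, which by uniqueness agrees with the original $f$ on the overlap. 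Both sides of \eqref{Eq:2-2} are then holomorphic on the connected set $B_1 \times (B_2 \cap D)$ and vanish on the open subset $(B_1 \cap D) \times (B_2 \cap D)$, so the identity theorem forces \eqref{Eq:2-2} to persist on $B_1 \times (B_2 \cap D)$. Repeating the parametric ODE argument in the $w$-variable, with $z \in B_1$ as parameter and initial data at some $w_1 \in B_2 \cap D$, yields a further extension of $f$ to the polydisc $B_1 \times B_2$, an open neighborhood of $(z_0, w_0) \in T^2$. Remark 2 and the main theorem then imply that $f$ is rational.

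The main technical point is the parametric version of Lemma~\ref{Lemma:ODEExistence}: one must check that Picard iteration in the distinguished variable produces iterates that are jointly holomorphic in the independent variable and in the parameter, and that the uniform convergence survives on compacta so that the limit is jointly holomorphic in $(z,w)$. This is standard but is the only step of the argument that is not immediately delivered by earlier results.
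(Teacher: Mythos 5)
Your proof is correct and follows essentially the same route as the paper: locate a point of $T^2$ where $p_rq_s\neq 0$, use the two differential equations in succession (rewritten as first-order holomorphic linear systems, solved by the existence lemma with the other variable as a holomorphic parameter) to continue $f$ across that point, and then invoke the main theorem. The only substantive differences are minor improvements on your side: you locate the nonvanishing point via the totally-real/identity-theorem argument applied to the product $p_rq_s$ (the paper's ``uniformly distributed zeros'' argument only addresses $p_r$, leaving the simultaneous nonvanishing of $q_s$ implicit), and you route the conclusion through Remark 2 rather than verifying the main theorem's hypothesis directly on the curves $(z,e^{i\theta}z^{n})$ as the paper does.
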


\begin{proof}
By the analycity of~$p_{r}$, there exists a positive integer $n$, such that for any~$\theta$, there exists $z_{0}\in S^1$ satisfying $p_{r}(z_{0},e^{i\theta}z_{0}^{n})\neq0$. Otherwise, for any~$w_{0}\in S^{1}$ and any positive integer~$n$,~$p_{r}(\cdot,w_{0})$ has uniformly distributed n zeros on~$S^{1}$. Since n and~$w_{0}$~are arbitary, $p_{r}$ is identically 0 on~$S^{1}\times S^{1}$. By the analycity of~$p_{r}$, we have $p_{r}$ is identically 0 on~$U$ , contradiction.

Let~$V\times V$ is a small bidisc centered at~$(z_{0},e^{i\theta}z_{0}^{n})$, such that~$p_{r}$ and $q_{s}$ are everywhere nonzero on~$V\times V$. Take~$w_{0}\in D\cap V$, such that there exists a small disc W centered at $w_{0}$ containing the point~$e^{i\theta}z_{0}^{n}$, and we can assume W is contained in~$V$. Rewrite~(\ref{Eq:2-2}) as the following ODE system depanding on z:
\begin{equation} \label{Eq:2-3}
\left\{
\begin{aligned}
y'_{1} & =y_{2}, \\
y'_{2} & =y_{3}, \\
& \cdots \\ 
y'_{s-1} & =y_{s}, \\
y'_{s} & =-\frac{1}{q_{s}} (q_{s-1}y_{s}+\cdots+q_{0}y_{1}), \\
y_{1}(z,w_{0}) & =f(z,w_{0}) , \cdots ,y_{s}(z,w_{0})=f^{(s-1)}_{w}(z,w_{0}).
\end{aligned}
\right.
\end{equation}

By lemma\ref{Lemma:ODEExistence} , for any $z\in D\cap V$,  (\ref{Eq:2-3}) has a solution~$g(z,w)$ on W, and by the uniqueness of the solution, $g(z,w)=f(z,w)$ when $w\in D\cap W$. According to the proof of lemma\ref{Lemma:ODEExistence},$g(z,w)$is holomorphic with respect to (z,w). Hence by the identity theorem of holomorphic functions, g(z,w) is a holomorphic extension of $f$, and $g(z,w)$ also satisfy~(\ref{Eq:2-1}), where~$z\in D\cap V$ and $w\in (D\cap V)\cup W$. Similarly, we can extend g to some open neighborhood of $(z_{0},e^{i\theta}z_{0}^{n})$ by solving the ODE system~(\ref{Eq:2-1}).

Therefore under the conditions of corollary~\ref{Cor2}, $f(z,w)$ satisfies the conditions of the main theorem. By our main theorem , we conclude that f is rational.
\end{proof}

\textbf{Acknowledgement}:

We would like to thank professor Xiangyu Zhou for inducing me to this interesting topic. We also thank professor Shaoshi Chen for many helpful discussions.

Tianlong Yu: National Center for Mathematics and Interdisciplinary Sciences, CAS, Beijing, 100190, China.

E-mail address: yutianlong940309@163.com
\end{document}